\newtheorem{theorem}{Theorem}[section]
\newtheorem{remark}[theorem]{Remark}
\numberwithin{equation}{section}
\title{}
\begin{document}

\title{On the edge reconstruction of six digraph polynomials}

\author{Jingyuan Zhang, Xian'an Jin}
\address{ School of Mathematical Science, Xiamen University, Xiamen 361001, China.
}

\author{Weigen Yan$^{*}$}

\address{$*$Corresponding author,
School of Science, Jimei University, Xiamen 361021, China
}

\email{doriazhang@outlook.com, xajin@xmu.edu.cn, weigenyan@263.net
}
\thanks{The work was supported in part by NSFC
Grants (12171402, 12071180).
}
\thanks{ 
\subjclass[2000]{ PACS number(s): 05.50.+q,84.30.Bv,01.55.+b,02.10.Yn}
Mathematics Subject Classification: 46E25, 20C20}

\keywords {Reconstruction conjecture; Edge reconstruction conjecture; Characteristic polynomial; Digraph}

\begin{abstract}
Let $G=(V,E)$ be a digraph having no loops and no multiple arcs, with vertex set $V=\{v_1,v_2,\ldots,v_n\}$ and arc set $E=\{e_1,e_2,\ldots,e_m\}$. Denote the adjacency matrix and the vertex in-degree diagonal matrix of $G$ by $A=(a_{ij})_{n\times n}$ and $D=diag(d^+(v_1),d^+(v_2),\cdots,d^+(v_n))$, where $a_{ij}=1$ if $(v_i,v_j)\in E(G)$ and $a_{ij}=0$ otherwise, and $d^+(v_i)$ is the number of arcs with head $v_i$. Set $f_1(G;x)=\det(xI-A), f_2(G;x)=\det(xI-D+A),f_3(G;x)=\det(xI-D-A),f_4(G;x)={\rm per}(xI-A), f_5(G;x)={\rm per}(xI-D+A),f_6(G;x)={\rm per}(xI-D-A)$, where $\det(X)$ and ${\rm per}(X)$ denote the determinant and the permanent of a square matrix $X$, respectively. In this paper, we consider a variant of the Ulam's vertex reconstruction conjecture and the Harary's edge reconstruction conjecture, and prove that, for any $1\leq i\leq 6$,
\begin{equation*}
(m-n)f_i(G;x)+xf_i'(G;x)=\sum\limits_{e\in E}f_i(G-e;x),
\end{equation*}
 which implies that if $m\neq n$, then $f_i(G;x)$ can be reconstructed from $\{f_i(G-e;x)|e\in E\}$.
\end{abstract}

\maketitle

\section{Introduction}
\hspace*{\parindent}
The famous Ulam's conjecture \cite{Ulam60} states that if two simple graphs $G_1$ and $G_2$ with vertex sets $V(G_1)=\{u_1,u_2,\ldots,u_n\}$ and $V(G_2)=\{v_1,v_2,\ldots,v_n\}$ and $n\geq 3$ satisfy that for each $1\leq i\leq n$, $G_1-u_i$ and $G_2-v_i$ are isomorphic, then $G_1$ and $G_2$ are isomorphic. That is, each simple graph $G$ with at least three vertices can be uniquely reconstructed from its deck $\{G-v|v\in V(G)\}$. Although some reconstructible graphs \cite{HM69} are obtained, the Ulam's conjecture is still open.

Harary \cite{Har65} posed a similar conjecture (i.e., the edge reconstruction conjecture), which states that every simple graph $G$ with edge set $E(G)$ can be reconstructed from its edge deck $\{G-e|e\in E(G)\}$ if $|E(G)|\geq 4$. Lov\'asz \cite{Lov72} proved that the edge reconstruction conjecture holds for simple graphs with $n$ vertices and at least $\frac{n(n-1)}{4}$ edges. M\"{u}ller \cite{Mul77} improved the Lov\'asz's result and proved that the edge reconstruction conjecture holds for simple graphs with $n$ vertices and more than $n\cdot\log_2n$ edges. Although M\"uller's result implies that the edge reconstruction conjecture holds for almost all simple graphs, the Harary's conjecture is still open.

Cvetkovi\'c, at the XVIII International Scientific Colloquium in Ilmenau in 1973, posed a related problem as follows: Can the characteristic polynomial $f(G;x)$ of a simple graph $G$ with vertex set $V(G)$ be reconstructed from $\{f(G-v;x)|v\in V(G)\}$ for $|V(G)|\geq 3$? The same problem was independently posed by Schwenk \cite{Sch79}. Gutman and Cvetkovi\'c \cite{GC75} obtained some results related to this problem. Note that $f'(G;x)=\sum\limits_{v\in V(G)}f(G-v;x)$. Hence the coefficients of $f(G;x)$ except for the constant term can be reconstructed from $\{f(G-v;x)|v\in V(G)\}$. No examples of non-unique reconstruction of the characteristic polynomial of graphs are known.  Under the assumption that the reconstruction of the characteristic polynomial is not unique, Cvetkovi\'c \cite{Cvet00} described some properties of graphs $G$ such that the constant term of $f(G;x)$ can not be reconstructed from $\{f(G-v;x)|v\in V(G)\}$. For the reconstruction problem of the characteristic polynomial of graphs, see for example the nice survey \cite{SS23}.

For the reconstructing problems of graphs, see for example some references \cite{And82,BIK05,For04,Godsil87,GC75,GM81,Hag00,Hos22,Man76,Thom77,Yuan82}.

In this paper, we consider a variant of the three problems above---the edge reconstruction problems on six digraph polynomials (see Egs. (1.1)-(1.6)). In the following, we assume that $G=(V(G),E(G))$ is a digraph having no loops and no multiple arcs, with vertex set $V(G)=\{v_1,v_2,\ldots,v_n\}$ and arc set $E(G)=\{e_1,e_2,\ldots,e_m\}$. Denote the adjacency matrix and the vertex in-degree diagonal matrix of $G$ by $A=(a_{ij})_{n\times n}$ and $D=diag(d^+(v_1),d^+(v_2),\cdots,d^+(v_n))$, where $a_{ij}=1$ if $(v_i,v_j)\in E(G)$ and $a_{ij}=0$ otherwise, and $d^+(v_i)$ is the number of arcs with head $v_i$. Then $D-A$ and $D+A$ are the Laplacian and the signless Laplacian matrices of $G$. Set
\begin{align}
f_1(G;x)&=\det(xI-A),\\
f_2(G;x)&=\det(xI-D+A),\\
f_3(G;x)&=\det(xI-D-A),\\
f_4(G;x)&={\rm per}(xI-A),\\
f_5(G;x)&={\rm per}(xI-D+A),\\
f_6(G;x)&={\rm per}(xI-D-A),
\end{align}
where $\det(X)$ and ${\rm per}(X)$ denote the determinant and permanent of a square matrix $X$. Then $f_1(G;x), f_2(G;x),f_3(G;x),f_4(G;x), f_5(G;x)$, and $f_6(G;x)$ are called the characteristic polynomial, Laplacian characteristic polynomial, signless Laplacian characteristic polynomial, permanental polynomial, Laplacian permanental polynomial, signless Laplacian permanental polynomial of the digraph $G$, respectively.

In the next section, we obtain two identities, one is is related to the determinants, and the other is related to the permanents. Using these two identities, in Section 3,  we prove that, for any $1\leq i\leq 6$,
\begin{equation}
(m-n)f_i(G;x)+xf_i'(G;x)=\sum\limits_{e\in E(G)}f_i(G-e;x).
\end{equation}
In Section 4, we prove that, $f_2(G;x)$ can be reconstructed from $\{f_2(G-e;x)|e\in E(G)\}$, and for any $i=1,3,4,5,6$, if $m\neq n$, then $f_i(G;x)$ can be reconstructed from $\{f_i(G-e;x)|e\in E(G)\}$, and for $i=1,4$, if $m=n$, some counterexamples are given to show that $f_i(G;x)$ can not be determined by $\{f_i(G-e;x)|e\in E(G)\}$.

\section{Two identities}
Let $X=(x_{st})_{n\times n}$ be a matrix of order $n$ over the complex field. For any $1\leq i,j\leq n$, define a matrix $X_{ij}=(x_{st}^{ij})_{n\times n}$, where
\begin{displaymath}
x_{st}^{ij} = \left\{ \begin{array}{ll}
x_{st}, & \textrm{if}\ (s,t)\neq (i,j); \\
0, & \textrm{if }\ (s,t)=(i,j).
\end{array} \right.
\end{displaymath}
That is, $X_{ij}$ is the matrix obtained from the matrix $X$ by replacing the $(i,j)$-entry $x_{ij}$ with $0$. For example, if
$$X=\left(\begin{array}{ccc}
a & b & c\\
e & f & g\\
r & s & t
\end{array}
\right),
$$
then
$$X_{12}=\left(\begin{array}{ccc}
a & 0 & c\\
e & f & g\\
r & s & t
\end{array}
\right),
X_{33}=\left(\begin{array}{ccc}
a & b & c\\
e & f & g\\
r & s & 0
\end{array}
\right).
$$
Obviously, if $x_{ij}=0$, then $X=X_{ij}$. Now we can prove the following result which will play an important role in the proof of the main results in this paper.

\begin{theorem}
Let $X=(x_{st})_{n\times n}$ be a matrix of order $n$ over the complex field and let $X_{ij}$ be defined as above.  Then the determinant of $X$ satisfies:
\begin{equation}
(n^2-n)\det(X)=\sum_{1\leq i,j\leq n}\det(X_{ij}).
\end{equation}
\end{theorem}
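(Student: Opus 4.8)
The plan is to expand both sides using the Leibniz formula for the determinant and compare monomial by monomial. Recall that $\det(X)=\sum_{\sigma\in S_n}\operatorname{sgn}(\sigma)\prod_{t=1}^n x_{t,\sigma(t)}$, where $S_n$ denotes the symmetric group on $\{1,\dots,n\}$. The first step is to observe that replacing the $(i,j)$-entry by $0$ kills exactly those monomials of $\det(X)$ that actually contain the factor $x_{ij}$, namely those indexed by permutations $\sigma$ with $\sigma(i)=j$; hence
\begin{equation*}
\det(X_{ij})=\sum_{\substack{\sigma\in S_n\\ \sigma(i)\neq j}}\operatorname{sgn}(\sigma)\prod_{t=1}^n x_{t,\sigma(t)}.
\end{equation*}

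Next I would sum this identity over all pairs $(i,j)$ with $1\le i,j\le n$ and interchange the order of summation, so that the right-hand side of \eqref{...} becomes $\sum_{\sigma\in S_n}\operatorname{sgn}(\sigma)\Bigl(\prod_{t=1}^n x_{t,\sigma(t)}\Bigr) N(\sigma)$, where $N(\sigma)=\#\{(i,j):1\le i,j\le n,\ \sigma(i)\neq j\}$ is the number of pairs for which $\sigma$ survives. The key (and essentially only) computation is that $N(\sigma)$ is independent of $\sigma$: for each of the $n$ choices of $i$ there are exactly $n-1$ values of $j$ different from $\sigma(i)$, so $N(\sigma)=n(n-1)=n^2-n$. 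Substituting this back yields $\sum_{1\le i,j\le n}\det(X_{ij})=(n^2-n)\det(X)$, which is the claimed identity.

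An equivalent route, which may read more cleanly, is to use that $\det$ is an affine-linear function of the single entry $x_{ij}$: thus $\det(X)-\det(X_{ij})=x_{ij}C_{ij}$, where $C_{ij}$ is the $(i,j)$-cofactor of $X$. Summing over $j$ for a fixed $i$ and invoking the Laplace expansion of $\det(X)$ along row $i$ gives $\sum_{j=1}^n\bigl(\det(X)-\det(X_{ij})\bigr)=\det(X)$, and then summing over $i$ produces $n^2\det(X)-\sum_{i,j}\det(X_{ij})=n\det(X)$, which rearranges to the same conclusion. I do not expect a genuine obstacle here; the only point deserving a line of care is the justification that $x_{ij}\mapsto\det(X)$ has degree at most one (equivalently, the passage from the Leibniz expansion of $\det(X)$ to that of $\det(X_{ij})$), after which everything reduces to the elementary counting step $N(\sigma)=n^2-n$.
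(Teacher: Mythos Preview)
Your proposal is correct. Your first route---expand via Leibniz, observe that $\det(X_{ij})$ drops exactly the terms with $\sigma(i)=j$, interchange the sums, and count $N(\sigma)=n(n-1)$---is precisely the paper's argument, only stated more directly: the paper packages the same double count as summing the entries of an $n!\times n^2$ array $T$ (rows indexed by permutations, columns by entries $x_{st}$) first by rows and then by columns, which is exactly your swap of summation order. Your second route via cofactors and the Laplace expansion, giving $\sum_{j}\det(X_{ij})=(n-1)\det(X)$ for each fixed $i$, is a genuinely different and slightly slicker argument not present in the paper; it avoids the Leibniz expansion entirely and yields the stronger row-wise identity before summing over $i$.
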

\begin{proof}
For convenience, we assume that $\{x_{11},x_{12},\ldots,x_{nn}\}$ is a set of $n^2$ independent commuting variables over the complex field. Hence there exist $n!$ terms in the expansion of the determinant $\det(X)$, denoted by $\mathcal T_{\alpha_1}, \mathcal T_{\alpha_2}, \ldots, \mathcal T_{\alpha_{n!}}$, where $S_n=\{\alpha_i|1\leq i\leq n\}$ is the symmetric group of order $n$ and $\mathcal T_{\alpha_i}=sgn(\alpha_i)x_{1\alpha_i(1)}x_{2\alpha_i(2)}\ldots x_{n\alpha_i(n)}$. Hence
\begin{equation}
\det(X)=\sum_{i=1}^{n!}\mathcal T_{\alpha_i}.
\end{equation}

We define an edge-weighted bipartite graph $\mathcal G(X)$ with bipartition $(U,V)$, where $U=S_n=\{\alpha_1,\alpha_2,\ldots,\alpha_{n!}\}$ and $V=\{x_{ij}|1\leq i,j\leq n\}$, and $(\alpha_i,x_{st})$ is an edge of $\mathcal G(X)$ if and only if $(s,t)\notin \{(k,\alpha_i(k))|1\leq k\leq n\}$, i.e., $(\alpha_i,x_{st})$ is an edge of $\mathcal G(X)$ if and only if $x_{st}$ is not a factor of the term $\mathcal T_{\alpha_i}=sgn(\alpha_i)x_{1\alpha_i(1)}x_{2\alpha_i(2)}\ldots x_{n\alpha_i(n)}$ in the expansion of $\det(X)$. The edge weight function $\omega$ of $\mathcal G(X)$ satisfies: for any edge $(\alpha_i,x_{st})$ of $\mathcal G(X)$, $\omega((\alpha_i,x_{st}))=\mathcal T_{\alpha_i}$. Denote by $T=(t_{\alpha_ix_{st}})_{n!\times n^2}$ the bipartite adjacency matrix of $\mathcal G(X)$. That is,
for any $1\leq i\leq n!, 1\leq s,t\leq n$,
\begin{displaymath}
t_{\alpha_ix_{st}} = \left\{ \begin{array}{ll}
\mathcal T_{\alpha_i}, & \textrm{if}\ (s,t)\notin \{(k,\alpha_i(k))|1\leq k\leq n\}; \\
0, &  \textrm{otherwise}.
\end{array} \right.
\end{displaymath}
In other words, each entry in the row indexed by $\alpha_i$ in matrix $T$ equals $\mathcal T_{\alpha_i}$ if $(s,t)\notin \{(k,\alpha_i(k))|1\leq k\leq n\}$, and equals zero otherwise. Hence each row of $T$ has $n^2-n$ entries each of which equals $\mathcal T_{\alpha_i}$ and $n$ entries each of which equals zero.

For example, if $n=3$, then $U=S_3=\{\alpha_1,\alpha_2,\alpha_3,\alpha_4,\alpha_5,\alpha_6\}$ and $V=\{x_{11},x_{12},x_{13},x_{21},\\x_{22},x_{23},x_{31},x_{32},x_{33}\}$, where
\begin{equation*}
\alpha_1=\left(
\begin{array}{ccc}
1&2&3\\
1&2&3
\end{array}
\right),
\alpha_2=\left(
\begin{array}{ccc}
1&2&3\\
1&3&2
\end{array}
\right),
\alpha_3=\left(
\begin{array}{ccc}
1&2&3\\
3&2&1
\end{array}
\right),
\end{equation*}
\begin{equation*}
\alpha_4=\left(
\begin{array}{ccc}
1&2&3\\
2&1&3
\end{array}
\right),
\alpha_5=\left(
\begin{array}{ccc}
1&2&3\\
2&3&1
\end{array}
\right),
\alpha_6=\left(
\begin{array}{ccc}
1&2&3\\
3&1&2
\end{array}
\right).
\end{equation*}
Then $\mathcal T_{\alpha_1}=x_{11}x_{22}x_{33}, \mathcal T_{\alpha_2}=-x_{11}x_{23}x_{32},\mathcal T_{\alpha_3}=-x_{13}x_{22}x_{31},\mathcal T_{\alpha_4}=-x_{12}x_{21}x_{33},\mathcal T_{\alpha_5}=x_{12}x_{23}x_{31}, \mathcal T_{\alpha_6}=x_{13}x_{21}x_{32}$,
and
\begin{equation*}
T=\left(
\begin{array}{cccccccccc}
0 & \mathcal T_{\alpha_1}& \mathcal T_{\alpha_1}& \mathcal T_{\alpha_1}& 0& \mathcal T_{\alpha_1} & \mathcal T_{\alpha_1}  & \mathcal T_{\alpha_1} & 0\\
0 & \mathcal T_{\alpha_2} &  \mathcal T_{\alpha_2} & \mathcal T_{\alpha_2} & \mathcal T_{\alpha_2} & 0 & \mathcal T_{\alpha_2} & 0 & \mathcal T_{\alpha_2}\\
\mathcal T_{\alpha_3} & \mathcal T_{\alpha_3} & 0 &  \mathcal T_{\alpha_3} & 0 &  \mathcal T_{\alpha_3} & 0 & \mathcal T_{\alpha_3}  & \mathcal T_{\alpha_3}\\
\mathcal T_{\alpha_4} & 0 &  \mathcal T_{\alpha_4} & 0 &  \mathcal T_{\alpha_4} & \mathcal T_{\alpha_4} &  \mathcal T_{\alpha_4} & \mathcal T_{\alpha_4} & 0\\
\mathcal T_{\alpha_5} & 0 & \mathcal T_{\alpha_5} & \mathcal T_{\alpha_5} & \mathcal T_{\alpha_5} & 0 & 0 & \mathcal T_{\alpha_5} & \mathcal T_{\alpha_5}\\
\mathcal T_{\alpha_6} &  \mathcal T_{\alpha_6} & 0 & 0 & \mathcal T_{\alpha_6} & \mathcal T_{\alpha_6} & \mathcal T_{\alpha_6} & 0 & \mathcal T_{\alpha_6}
\end{array}
\right).
\end{equation*}

Note that the $i$-th row in matrix $T$ has exactly $n^2-n$ non-zero entries each of which equals $\mathcal T_{\alpha_i}$. Hence the sum of entries of $i$-th row in $T$ equals $(n^2-n)\mathcal T_{\alpha_i}$. So the sum of all entries in $T$ equals $(n^2-n)\det(X)$. That is,
\begin{equation}
\sum_{i=1}^{n!}\sum_{1\leq s,t\leq n}t_{\alpha_ix_{st}}=(n^2-n)\det(X).
\end{equation}

On the other hand, by the definition of $X_{st}$, the sum of entries of $x_{st}$-th column of $T$ equals exactly the determinant of $X_{st}$. Hence
\begin{equation}
\sum_{1\leq s,t\leq n}\sum_{i=1}^{n!}t_{\alpha_ix_{st}}=\sum_{1\leq s,t\leq n}\det(X_{st}).
\end{equation}

Eq. (2.1) is immediate from Eqs. (2.3) and (2.4) and hence the theorem holds.
\end{proof}

In the theorem above, let $m$ be the number of non-zero entries of $X$. If $x_{ij}=0$, then $X_{ij}=X$. The following result is equivalent to the theorem above.

\begin{theorem}
Let $X=(x_{st})_{n\times n}$ be a matrix of order $n$ over the complex field and let $m$ be the number of non-zero entries of $X$. Then
\begin{equation}
(m-n)\det(X)=\sum_{(i,j)\in I}\det(X_{ij}),
\end{equation}
where $I=\{(i,j)|x_{ij}\neq 0, 1\leq i,j\leq n\}$.
\end{theorem}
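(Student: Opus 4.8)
The plan is to derive this statement as a direct corollary of Theorem 2.2 (the identity $(n^2-n)\det(X)=\sum_{1\leq i,j\leq n}\det(X_{ij})$), simply by discarding the terms in the sum that contribute trivially. First I would observe that the index set $\{(i,j): 1\leq i,j\leq n\}$ splits into the set $I=\{(i,j): x_{ij}\neq 0\}$, which has cardinality $m$ by definition, and its complement $I^c=\{(i,j): x_{ij}=0\}$, which has cardinality $n^2-m$. The key remark, already noted in the excerpt just before the statement, is that if $x_{ij}=0$ then $X_{ij}=X$, since replacing a zero entry by zero changes nothing; hence $\det(X_{ij})=\det(X)$ for every $(i,j)\in I^c$.

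Next I would split the sum in Theorem 2.2 accordingly:
\begin{equation*}
(n^2-n)\det(X)=\sum_{(i,j)\in I}\det(X_{ij})+\sum_{(i,j)\in I^c}\det(X_{ij})=\sum_{(i,j)\in I}\det(X_{ij})+(n^2-m)\det(X).
\end{equation*}
Subtracting $(n^2-m)\det(X)$ from both sides gives $(n^2-n)\det(X)-(n^2-m)\det(X)=\sum_{(i,j)\in I}\det(X_{ij})$, and the left-hand side collapses to $(m-n)\det(X)$, which is exactly Eq. (2.5).

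Conversely, one can check that Theorem 2.4 implies Theorem 2.2 by applying it to a matrix all of whose entries are (say) nonzero indeterminates, so that $m=n^2$; thus the two statements are genuinely equivalent, as claimed in the text. The argument here involves no real obstacle: the only thing to be careful about is the bookkeeping of the two index sets and the fact that $|I|=m$ and $|I^c|=n^2-m$ partition the $n^2$ positions, together with the elementary observation $X_{ij}=X$ when $x_{ij}=0$. No combinatorial machinery beyond Theorem 2.2 is needed.
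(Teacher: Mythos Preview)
Your proof is correct and matches the paper's own treatment: the paper simply remarks that if $x_{ij}=0$ then $X_{ij}=X$ and declares the result ``equivalent to the theorem above,'' which is exactly the splitting-and-subtracting argument you spell out in detail. (Note that in the paper the $(n^2-n)$ identity is Theorem~2.1 and the present statement is Theorem~2.2, not 2.2 and 2.4 as in your labeling.)
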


Note that the permanent of a matrix $X=(x_{ij})_{n\times n}$ is defined as
\begin{equation}
{\rm per}(X)=\sum_{\alpha\in S_n}x_{1\alpha(1)}x_{2\alpha(2)}\ldots x_{n\alpha(n)},
\end{equation}
where $\alpha$ ranges over the set of the symmetric group of order $n$. Similarly, we can prove the following result.

\begin{theorem}
Let $X=(x_{st})_{n\times n}$ be a matrix of order $n$ over the complex field and let $m$ be the number of non-zero entries of $X$. Then the permanent ${\rm per}(X)$ of $X$ satisfies:
\begin{equation}
(m-n){\rm per}(X)=\sum_{(i,j)\in I}{\rm per}(X_{ij}),
\end{equation}
where $I=\{(i,j)|x_{ij}\neq 0, 1\leq i,j\leq n\}$.
\end{theorem}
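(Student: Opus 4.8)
The plan is to mimic the proof of Theorem 2.1 essentially verbatim, replacing the sign $\mathrm{sgn}(\alpha_i)$ by $1$ throughout, and then transfer from the ``all $(i,j)$'' formulation to the ``only non-zero entries'' formulation exactly as was done to pass from Theorem 2.1 to Theorem 2.2. So the first step is to record the $n!$ terms $\mathcal P_{\alpha_i}=x_{1\alpha_i(1)}x_{2\alpha_i(2)}\cdots x_{n\alpha_i(n)}$ in the permanent expansion (2.6), treating the $x_{st}$ as independent commuting variables, so that ${\rm per}(X)=\sum_{i=1}^{n!}\mathcal P_{\alpha_i}$.

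Next I would build the same edge-weighted bipartite graph $\mathcal G(X)$ with parts $U=S_n$ and $V=\{x_{st}\}$, where $(\alpha_i,x_{st})$ is an edge iff $x_{st}$ is \emph{not} a factor of $\mathcal P_{\alpha_i}$, i.e.\ iff $(s,t)\notin\{(k,\alpha_i(k))\}$, and I would weight that edge by $\mathcal P_{\alpha_i}$. Let $P=(p_{\alpha_i x_{st}})_{n!\times n^2}$ be the associated weighted bipartite adjacency matrix; each row $\alpha_i$ then has exactly $n^2-n$ entries equal to $\mathcal P_{\alpha_i}$ and $n$ entries equal to $0$, so summing $P$ by rows gives $\sum_{i,s,t}p_{\alpha_i x_{st}}=(n^2-n){\rm per}(X)$. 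The only structural change from Theorem 2.1 is that all signs are now $+1$, which if anything simplifies matters.

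Then I would sum $P$ by columns: the $x_{st}$-column of $P$ contains, for each permutation $\alpha_i$ not using the position $(s,t)$, the full term $\mathcal P_{\alpha_i}$, and for the permutations using $(s,t)$ it contributes $0$; by the definition of $X_{st}$ (setting the $(s,t)$-entry to zero kills precisely those permutations that used $x_{st}$) this column sum equals ${\rm per}(X_{st})$. Equating the two ways of summing $P$ yields $(n^2-n){\rm per}(X)=\sum_{1\le s,t\le n}{\rm per}(X_{st})$. Finally, to obtain (2.8), observe that whenever $x_{ij}=0$ we have $X_{ij}=X$, so the $n^2-m$ zero-positions each contribute one copy of ${\rm per}(X)$ to the right-hand side; moving these across gives $(n^2-n){\rm per}(X)-(n^2-m){\rm per}(X)=(m-n){\rm per}(X)=\sum_{(i,j)\in I}{\rm per}(X_{ij})$, which is (2.8).

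There is no real obstacle here: the argument is purely bookkeeping and the sign issue that one might worry about in a permanent-versus-determinant translation works in our favour, since the permanent has no signs at all. The only point deserving a line of care is the column-sum identification of ${\rm per}(X_{st})$, where one must note that a permutation $\alpha_i$ contributes its term to the $x_{st}$-column of $P$ exactly when $\alpha_i(s)\neq t$, which is the same set of permutations whose term survives in the expansion of ${\rm per}(X_{st})$, and that the surviving terms are literally unchanged (no sign, no value change) since only one entry of $X$ was zeroed and it did not appear in those terms. With that remark in place, the proof is complete.
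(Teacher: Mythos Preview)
Your proposal is correct and is exactly the approach the paper intends: it states only ``Similarly, we can prove the following result,'' meaning one reruns the bipartite double-counting of Theorem~2.1 with the unsigned terms $\mathcal P_{\alpha_i}$ in place of the signed $\mathcal T_{\alpha_i}$ and then passes from the $(n^2-n)$-form to the $(m-n)$-form just as in Theorem~2.2. Your remark about the column-sum identification is the one point worth spelling out, and you have it right.
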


\section{Six digraph polynomials }
In this section, we prove firstly a result which implies each of the six digraph polynomials $f_1(G;x),f_2(G;x),\ldots,f_6(G;x)$ defined in Section 1 satisfies Eq. (1.7).

Suppose that both $\beta$ and $\gamma$ are real numbers satisfying $\gamma\neq0$. Let $G$ be a digraph with vertex set $V(G)=\{v_1,v_2,\ldots,v_n\}$ and arc set $E(G)=\{e_1,e_2,\ldots,e_m\}$, which has no loops or multiple arcs. Set
$$g_1(G;x)=\det(xI_n-\beta D-\gamma A)$$
and
$$g_2(G;x)={\rm per}(xI_n-\beta D-\gamma A),$$
where $D$ and $A$ are the vertex in-degree diagonal matrix and the adjacency matrix of $G$, respectively.

\begin{theorem}
Both $g_1(G;x)$ and $g_2(G;x)$ defined as above satisfy:
\begin{equation}
(m-n)g_1(G;x)+xg_1'(G;x)=\sum_{e\in E(G)}g_1(G-e;x),
\end{equation}
\begin{equation}
(m-n)g_2(G;x)+xg_2'(G;x)=\sum_{e\in E(G)}g_2(G-e;x).
\end{equation}
\end{theorem}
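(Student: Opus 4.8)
The plan is to apply Theorem 2.2 and Theorem 2.3 to the single matrix $Y=xI_n-\beta D-\gamma A$, viewed as a matrix over the polynomial ring $\mathbb{C}[x]$ (the proofs of Theorems 2.1--2.3 only rearrange the Leibniz/permanent expansions, so they hold over any commutative ring), and then to rewrite each summand $\det(Y_{ij})$, ${\rm per}(Y_{ij})$ in terms of the edge deck of $G$. First I would count the nonzero entries of $Y$: the diagonal entries are the nonzero polynomials $x-\beta d^+(v_i)$, and the $(i,j)$-entry with $i\neq j$ is $-\gamma a_{ij}$, which (using $\gamma\neq 0$ and the absence of loops) is nonzero exactly when $(v_i,v_j)\in E(G)$. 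Hence $Y$ has precisely $m+n$ nonzero entries, and the index set $I$ of Theorem 2.2 is the disjoint union of the $n$ diagonal positions $(i,i)$ and the $m$ arc positions $(i,j)$ with $(v_i,v_j)\in E(G)$. Theorem 2.2 then gives
\[
m\det(Y)=\sum_{i=1}^n\det(Y_{ii})+\sum_{(v_i,v_j)\in E(G)}\det(Y_{ij}),
\]
and Theorem 2.3 gives the same identity with ${\rm per}$ in place of $\det$. Throughout write $Y[\hat\imath]$ for the matrix obtained from $Y$ by deleting row $i$ and column $i$, and set $C_{ii}=\det(Y[\hat\imath])$.

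For the diagonal terms, $Y_{ii}$ is $Y$ with the entry $x-\beta d^+(v_i)$ replaced by $0$, and since $Y_{ii}$ and $Y$ differ only in row $i$ their $(i,k)$-cofactors agree for every $k$; expanding $\det$ along row $i$ therefore yields $\det(Y_{ii})=\det(Y)-\bigl(x-\beta d^+(v_i)\bigr)C_{ii}$. Summing over $i$ and using the elementary identity $\sum_{i=1}^n C_{ii}=g_1'(G;x)$ — immediate from the Leibniz expansion since only the diagonal of $Y$ depends on $x$ — gives $\sum_{i=1}^n\det(Y_{ii})=n\det(Y)-x\,g_1'(G;x)+\beta\sum_{i=1}^n d^+(v_i)\,C_{ii}$.

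For the arc terms the key observation is that $Y_{ij}$ is \emph{not} the matrix of $G-e$ for $e=(v_i,v_j)$: deleting $e$ also lowers $d^+(v_j)$ by $1$, so $xI_n-\beta D(G-e)-\gamma A(G-e)=Y_{ij}+\beta E_{jj}$, where $E_{jj}$ is the matrix with a $1$ in position $(j,j)$ and zeros elsewhere. Expanding $\det$ along row $j$ of $Y_{ij}+\beta E_{jj}$ gives $g_1(G-e;x)=\det(Y_{ij})+\beta\,\det\bigl(Y_{ij}[\hat\jmath]\bigr)$, and $Y_{ij}[\hat\jmath]=Y[\hat\jmath]$ because the modified entry $(i,j)$ lies in the deleted column $j$; hence $\det(Y_{ij})=g_1(G-e;x)-\beta\,C_{jj}$. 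Summing over all arcs and using that exactly $d^+(v_j)$ arcs have head $v_j$, we obtain $\sum_{(v_i,v_j)\in E(G)}\det(Y_{ij})=\sum_{e\in E(G)}g_1(G-e;x)-\beta\sum_{j=1}^n d^+(v_j)\,C_{jj}$.

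Adding the diagonal and arc contributions, the two terms $\beta\sum_i d^+(v_i)C_{ii}$ cancel, leaving $m\det(Y)=n\det(Y)-x\,g_1'(G;x)+\sum_{e\in E(G)}g_1(G-e;x)$, which rearranges to Eq. (3.1) since $g_1(G;x)=\det(Y)$. Equation (3.2) follows by the identical argument with permanents: Theorem 2.3 replaces Theorem 2.2, the sign-free Laplace expansion of the permanent along a row replaces cofactor expansion, the matrix identity $xI_n-\beta D(G-e)-\gamma A(G-e)=Y_{ij}+\beta E_{jj}$ is unchanged, and $\sum_{i=1}^n{\rm per}\bigl(Y[\hat\imath]\bigr)=g_2'(G;x)$ plays the role of the derivative formula for $\det$. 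I expect the main obstacle to be the bookkeeping in the arc terms — correctly isolating the $\beta E_{jj}$ discrepancy between $Y_{ij}$ and the matrix of $G-e$, and checking that, after summing against the diagonal corrections, these corrections cancel exactly; a minor point is to note that Theorems 2.2--2.3 may be applied over $\mathbb{C}[x]$, which is harmless.
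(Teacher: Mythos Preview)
Your proposal is correct and follows essentially the same approach as the paper: apply Theorems~2.2 and~2.3 to $Y=xI_n-\beta D-\gamma A$ (with its $m+n$ nonzero entries), split the resulting sum into diagonal and arc contributions, correct the arc terms by the $\beta E_{jj}$ discrepancy coming from the drop in $d^+(v_j)$, expand the diagonal terms via the derivative identity $\sum_i\det(Y[\hat\imath])=g_1'(G;x)$, and observe that the two $\beta\sum_i d^+(v_i)C_{ii}$ corrections cancel. Your remark that Theorems~2.2--2.3 hold over $\mathbb{C}[x]$ is a small but worthwhile clarification that the paper leaves implicit.
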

\begin{proof}
Note that $G$ has $m$ arcs. Hence $xI_n-\beta D-\gamma A$ has $m+n$ non-zero entries. By Theorems 2.2 and $2.3$,
\begin{equation}
(m+n-n)\det(xI_n-\beta D-\gamma A)=\sum_{e\in E(G)}\det(xI_n-\beta D-\gamma A_e)+\sum_{i=1}^n \det(xI_n^{(i)}-\beta D^{(i)}-\gamma A),
\end{equation}
\begin{equation}
(m+n-n){\rm per}(xI_n-\beta D-\gamma A)=\sum_{e\in E(G)}{\rm per}(xI_n-\beta D-\gamma A_e)+\sum_{i=1}^n {\rm per}(xI_n^{(i)}-\beta D^{(i)}-\gamma A),
\end{equation}
where $A_e$ is the adjacency matrix of digraph $G-e$, and $I_n^{(i)}$ is the diagonal matrix of order $n$ with diagonal entries equal to one except for the $i$-th entry equal to zero, and $D^{(i)}=diag(d_G^+(v_1),\ldots,d_G^+(v_{i-1}),0,d_G^+(v_{i+1}),\ldots,d_G^+(v_n))$, and $d_G^+(v)$ denotes the number of arcs with head $v$ in G.

Denote the vertex in-degree diagonal matrix of $G-e$ by $D_e$. Without loss of generality, let $e=(v_s,v_t)$. Then
\begin{equation}
\det(xI_n-\beta D-\gamma A_e)=\det(xI_n-\beta D_e-\gamma A_e)-\beta\det[(xI_n-\beta D-\gamma A)_t],
\end{equation}
\begin{equation}
{\rm per}(xI_n-\beta D-\gamma A_e)={\rm per}(xI_n-\beta D_e-\gamma A_e)-\beta {\rm per}[(xI_n-\beta D-\gamma A)_t],
\end{equation}
where $(xI_n-\beta D-\gamma A)_t$ is the matrix obtained from $xI_n-\beta D-\gamma A$ by deleting the $t$-th row and the $t$-th column.

On the other hand,
\begin{equation}
\det(xI_n^{(i)}-\beta D^{(i)}-\gamma A)=\det(xI_n-\beta D-\gamma A)-(x-\beta d_G^+(v_i))\det[(xI_n-\beta D-\gamma A)_i],
\end{equation}
\begin{equation}
{\rm per}(xI_n^{(i)}-\beta D^{(i)}-\gamma A)={\rm per}(xI_n-\beta D-\gamma A)-(x-\beta d_G^+(v_i)){\rm per}[(xI_n-\beta D-\gamma A)_i].
\end{equation}

Hence
\begin{align}
&\sum_{e\in E(G)}\det(xI_n-\beta D-\gamma A_e)\nonumber\\
&=\sum_{e\in E(G)}\det(xI_n-\beta D_e-\gamma A_e)-\beta\sum_{(v_s,v_t)\in E(G)}\det[(xI_n-\beta D-\gamma A)_t]\nonumber\\
&=\sum_{e\in E(G)}g_1(G-e;x)-\beta\sum_{t=1}^nd_G^+(v_t)\det[(xI_n-\beta D-\gamma A)_t],
&\\
\nonumber\\
&\sum_{e\in E(G)}{\rm per}(xI_n-\beta D-\gamma A_e)\nonumber\\
&=\sum_{e\in E(G)}{\rm per}(xI_n-\beta D_e-\gamma A_e)-\beta\sum_{(v_s,v_t)\in E(G)}{\rm per}[(xI_n-\beta D-\gamma A)_t]\nonumber\\
&=\sum_{e\in E(G)}g_2(G-e;x)-\beta\sum_{t=1}^nd_G^+(v_t){\rm per}[(xI_n-\beta D-\gamma A)_t],
&\\
\nonumber\\
&\sum_{i=1}^n\det(xI_n^{(i)}-\beta D^{(i)}-\gamma A)\nonumber\\
&=\sum_{i=1}^n\left[\det(xI_n-\beta D-\gamma A)-(x-\beta d_G^+(v_i))\det[(xI_n-\beta D-\gamma A)_i]\right]\nonumber\\
&=ng_1(G;x)-x\sum_{i=1}^n\det[(xI_n-\beta D-\gamma A)_i]+\beta\sum_{i=1}^nd_G^+(v_i)\det[(xI_n-\beta D-\gamma A)_i],
\end{align}
\begin{align}
&\sum_{i=1}^n{\rm per}(xI_n^{(i)}-\beta D^{(i)}-\gamma A)\nonumber\\
&=\sum_{i=1}^n\left[{\rm per}(xI_n-\beta D-\gamma A)-(x-\beta d_G^+(v_i)){\rm per}[(xI_n-\beta D-\gamma A)_i]\right]\nonumber\\
&=ng_2(G;x)-x\sum_{i=1}^n{\rm per}[(xI_n-\beta D-\gamma A)_i]+\beta\sum_{i=1}^nd_G^+(v_i){\rm per}[(xI_n-\beta D-\gamma A)_i].
\end{align}

It is not difficult to show that
\begin{equation}
\sum_{i=1}^n\det[(xI_n-\beta D-\gamma A)_i]=g_1'(G;x),
\end{equation}
\begin{equation}
\sum_{i=1}^n{\rm per}[(xI_n-\beta D-\gamma A)_i]=g_2'(G;x).
\end{equation}
By Eqs. (3.9)-(3.14),
\begin{align}
&\sum_{e\in E(G)}\det(xI_n-\beta D-\gamma A_e)+\sum_{i=1}^n\det(xI_n^{(i)}-\beta D^{(i)}-\gamma A)\nonumber\\
&=\sum_{e\in E(G)}g_1(G-e;x)+ng_1(G;x)-xg_1'(G;x),
\end{align}
\begin{align}
&\sum_{e\in E(G)}{\rm per}(xI_n-\beta D-\gamma A_e)+\sum_{i=1}^n{\rm per}(xI_n^{(i)}-\beta D^{(i)}-\gamma A)\nonumber\\
&=\sum_{e\in E(G)}g_2(G-e;x)+ng_2(G;x)-xg_2'(G;x).
\end{align}
So the theorem is immediate from Eqs. (3.3), (3.4), (3.15) and (3.16).
\end{proof}

\begin{remark}
Theorem 3.1 still holds for any arc-weighted digraph $G$ with arc-weight function $\omega:E(G)\rightarrow\mathbb{R}\setminus\{0\}$, where $E(G)$ is the arc set of $G$.
\end{remark}

\begin{theorem}
Let $G$ be a digraph with vertex set $V(G)=\{v_1,v_2,\ldots,v_n\}$ and arc set $E(G)=\{e_1,e_2,\ldots,e_m\}$, which has no loops or multiple arcs. Then each of the six digraph polynomials $f_1(G;x),f_2(G;x),\ldots,f_6(G;x)$ defined in Section 1 satisfies Eq. {\rm (1.7)}, i.e.,
\begin{equation*}
(m-n)f_i(G;x)+xf_i'(G;x)=\sum_{e\in E(G)}f_i(G-e;x)
\end{equation*}
for any $1\leq i\leq6$.
\end{theorem}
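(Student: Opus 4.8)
The plan is to derive Theorem 3.3 as a direct consequence of Theorem 3.1 by choosing the parameters $\beta,\gamma$ appropriately for each of the six polynomials. Recall that Theorem 3.1 establishes, for any real $\beta$ and any nonzero real $\gamma$, that both $g_1(G;x)=\det(xI_n-\beta D-\gamma A)$ and $g_2(G;x)={\rm per}(xI_n-\beta D-\gamma A)$ satisfy the identity $(m-n)g(G;x)+xg'(G;x)=\sum_{e\in E(G)}g(G-e;x)$. The six digraph polynomials are precisely instances of $g_1$ or $g_2$ for suitable $(\beta,\gamma)$.

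Concretely, I would observe: $f_1(G;x)=\det(xI-A)=g_1(G;x)$ with $(\beta,\gamma)=(0,1)$; $f_2(G;x)=\det(xI-D+A)=g_1(G;x)$ with $(\beta,\gamma)=(1,-1)$; $f_3(G;x)=\det(xI-D-A)=g_1(G;x)$ with $(\beta,\gamma)=(1,1)$; and likewise $f_4,f_5,f_6$ are $g_2(G;x)$ with $(\beta,\gamma)=(0,1)$, $(1,-1)$, $(1,1)$ respectively. In every case $\gamma\in\{1,-1\}\neq 0$, so the hypothesis of Theorem 3.1 is met. Applying (3.1) for $i\in\{1,2,3\}$ and (3.2) for $i\in\{4,5,6\}$ with the corresponding parameter choices yields exactly Eq. (1.7) for each $f_i$, since $G-e$ has vertex in-degree diagonal matrix $D_e$ and adjacency matrix $A_e$, so $g_1(G-e;x)=\det(xI_n-\beta D_e-\gamma A_e)=f_i(G-e;x)$ and similarly for the permanent version.

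One small point I would make explicit is that the parameter $\beta=0$ case is permitted in Theorem 3.1 (only $\gamma\neq 0$ is required), which covers $f_1$ and $f_4$; indeed when $\beta=0$ the terms involving $D$ and $d_G^+(v_i)$ in the proof of Theorem 3.1 simply vanish, and the argument goes through verbatim. Since there is essentially nothing left to prove beyond this bookkeeping, there is no real obstacle here; the entire content of Theorem 3.3 has already been absorbed into Theorems 2.1--2.3 and 3.1. I would therefore present the proof as a short paragraph tabulating the six parameter assignments and invoking Theorem 3.1.

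\begin{proof}
Each of the six polynomials is of the form $g_1(G;x)=\det(xI_n-\beta D-\gamma A)$ or $g_2(G;x)={\rm per}(xI_n-\beta D-\gamma A)$ for appropriate real numbers $\beta$ and $\gamma$ with $\gamma\neq 0$:
\begin{equation*}
\begin{array}{lll}
f_1(G;x)=g_1(G;x) \text{ with } (\beta,\gamma)=(0,1), & f_4(G;x)=g_2(G;x) \text{ with } (\beta,\gamma)=(0,1),\\
f_2(G;x)=g_1(G;x) \text{ with } (\beta,\gamma)=(1,-1), & f_5(G;x)=g_2(G;x) \text{ with } (\beta,\gamma)=(1,-1),\\
f_3(G;x)=g_1(G;x) \text{ with } (\beta,\gamma)=(1,1), & f_6(G;x)=g_2(G;x) \text{ with } (\beta,\gamma)=(1,1).
\end{array}
\end{equation*}
In each case $\gamma\in\{1,-1\}$ is nonzero, so Theorem 3.1 applies. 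Moreover, for any arc $e$, the digraph $G-e$ has vertex in-degree diagonal matrix $D_e$ and adjacency matrix $A_e$, so $g_1(G-e;x)=\det(xI_n-\beta D_e-\gamma A_e)$ equals $f_i(G-e;x)$ for $i\in\{1,2,3\}$, and likewise $g_2(G-e;x)={\rm per}(xI_n-\beta D_e-\gamma A_e)$ equals $f_i(G-e;x)$ for $i\in\{4,5,6\}$. Substituting the tabulated values of $(\beta,\gamma)$ into Eq. (3.1) when $i\in\{1,2,3\}$ and into Eq. (3.2) when $i\in\{4,5,6\}$ gives
\begin{equation*}
(m-n)f_i(G;x)+xf_i'(G;x)=\sum_{e\in E(G)}f_i(G-e;x)
\end{equation*}
for every $1\leq i\leq 6$, which is Eq. (1.7).
\end{proof}
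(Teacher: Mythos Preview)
Your proof is correct and takes essentially the same approach as the paper: both simply tabulate the three parameter choices $(\beta,\gamma)\in\{(0,1),(1,-1),(1,1)\}$ and invoke Theorem~3.1 to cover all six polynomials. Your version is slightly more explicit in verifying that $g_j(G-e;x)=f_i(G-e;x)$, but the argument is otherwise identical.
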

\begin{proof}
In Theorem 3.1, if we set $\beta=0$ and $\gamma=1$, then both $f_1(G;x)$ and $f_4(G;x)$ satisfy Eq. (1.7), if we set $\beta=1$ and $\gamma=-1$, then both $f_2(G;x)$ and $f_5(G;x)$ satisfy Eq. (1.7), and if we set $\beta=1$ and $\gamma=1$, then both $f_3(G;x)$ and $f_6(G;x)$ satisfy Eq. (1.7).
\end{proof}

\section{The edge reconstruction of $f_1(G;.x),f_2(G;x),\ldots,f_6(G;x)$}

Using the results in the section above, in this section, we discuss the edge reconstruction problem: Can the six digraph polynomials $f_1(G;x),f_2(G;x),\ldots,f_6(G;x)$ defined in Section 1 be determined by $\{f_1(G-e;x)|e\in E(G)\}, \{f_2(G-e;x)|e\in E(G)\}, \ldots, \{f_6(G-e;x)|e\in E(G)\}$, respectively?

\begin{theorem}
Let $G$ be a digraph with vertex set $V(G)=\{v_1,v_2,\ldots,v_n\}$ and arc set $E(G)=\{e_1,e_2,\ldots,e_m\}$, which has no loops or multiple arcs.
If $m\neq n$, then $f_i(G;x)$ can be reconstructed from $\{f_i(G-e;x)|e\in E(G)\}$ for $1\leq i\leq 6$.
\end{theorem}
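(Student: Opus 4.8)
The plan is to turn Eq.~(1.7) into an explicit reconstruction procedure. From the edge deck $\{f_i(G-e;x):e\in E(G)\}$ one first recovers the integers $m$ (the number of members of the deck) and $n$ (their common degree, since each $f_i(G-e;x)$ is monic of degree $n$), and then forms $S(x):=\sum_{e\in E(G)}f_i(G-e;x)$, which is determined by the deck. By Theorem~3.3, $S(x)=(m-n)f_i(G;x)+xf_i'(G;x)$. Writing $f_i(G;x)=\sum_{k=0}^{n}c_kx^k$ with $c_n=1$ and comparing coefficients of $x^k$, one gets $[x^k]S(x)=(m-n+k)c_k$, so $c_k=[x^k]S(x)/(m-n+k)$ is recovered for every $k$ with $m-n+k\neq0$.

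When $m>n$ this already finishes the argument: $m-n+k>0$ for all $0\le k\le n$, so $c_0,\dots,c_{n-1}$ are all recovered (and $c_n=1$), hence $f_i(G;x)$ is reconstructed; equivalently, $x^{m-n}f_i(G;x)$ is the unique antiderivative of $x^{m-n-1}S(x)$ with vanishing constant term, and the latter is read off the deck. Since $m\neq n$, the only case that really needs work is $m<n$, where $1\le n-m\le n-1$ and the step above determines every coefficient of $f_i(G;x)$ except the single one $c_{n-m}$, whose multiplier $m-n+k$ in the identity vanishes.

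The hard part will therefore be to pin down $c_{n-m}$ when $m<n$. Eq.~(1.7) alone cannot do so, and neither can its iterates: deleting $j$ arcs at a time gives $\sum_{|F|=j}f_i(G-F;x)=\sum_{k}\binom{m-n+k}{j}c_kx^k$, and $\binom{0}{j}=0$ for $j\ge1$, so no linear combination of these deck-derived identities ever involves $c_{n-m}$. I would instead exploit the combinatorial meaning of the coefficient: up to the sign $(-1)^m$, $c_{n-m}$ equals $\sum_{|S|=m}\det\big((\beta D+\gamma A)[S]\big)$, and $\sum_{|S|=m}{\rm per}\big((\beta D+\gamma A)[S]\big)$ for $i=4,5,6$, where $S$ runs over the $m$-element vertex subsets, $(\beta D+\gamma A)[S]$ is the principal submatrix on the rows and columns indexed by $S$, and $(\beta,\gamma)$ is the scalar pair attached to $f_i$ in the proof of Theorem~3.3. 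Because $G$ has exactly $m$ arcs, each such $m\times m$ block is too tight to avoid exhausting $E(G)$: for $i=1,4$ (where $\beta=0$) a term $\prod_{v\in S}\gamma a_{v\sigma(v)}$ in the expansion over permutations $\sigma$ of $S$ is nonzero only when the $m$ arcs $\{(v,\sigma(v)):v\in S\}$ are precisely the arcs of $G$, so $c_{n-m}=0$ unless $E(G)$ is a vertex-disjoint union of directed cycles, in which case $c_{n-m}=\pm1$ according to the parity of the number of cycles; for $i=2,3,5,6$ one gets a similarly rigid but more intricate description mixing short cycles with in-degree factors. Establishing that this residual structural quantity is already pinned down by the edge deck is, I expect, the crux and the genuinely delicate step of the proof.
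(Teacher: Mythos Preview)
Your coefficient-by-coefficient reading of Eq.~(1.7) is exactly what the paper does: it plugs $x=0$ into $(m-n)f_i+xf_i'=S$ to get $f_i(G;0)=S(0)/(m-n)$ and then asserts that this first-order equation with that initial value has a unique solution. For $m>n$ your argument and the paper's coincide and are correct. You are also right that this reasoning breaks when $m<n$: the homogeneous equation $(m-n)h+xh'=0$ has the polynomial solution $h(x)=x^{n-m}$, which vanishes at $0$, so the ``initial condition'' adds nothing and $c_{n-m}$ is left free. The paper glosses over this; you have spotted a genuine gap.

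Where your proposal goes wrong is in assuming the gap can be closed. It cannot: the statement is actually false for $m<n$. Take $n=3$, $m=2$, and compare $G_1$ with arcs $\{(v_1,v_2),(v_2,v_1)\}$ to $G_2$ with arcs $\{(v_1,v_2),(v_2,v_3)\}$. Deleting any single arc from either graph leaves an acyclic digraph, so $f_1(G_j-e;x)=x^3$ for every $e$ and both $f_1$-edge-decks are $\{x^3,x^3\}$; yet $f_1(G_1;x)=x^3-x$ while $f_1(G_2;x)=x^3$. The same pair gives $f_4(G_1;x)=x^3+x\neq x^3=f_4(G_2;x)$ with identical $f_4$-decks. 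In your own terms, $E(G_1)$ is a single directed $2$-cycle so your formula gives $c_{n-m}=-1$, whereas $E(G_2)$ is a path so $c_{n-m}=0$; the deck cannot distinguish these. Hence the ``genuinely delicate step'' you anticipate is not delicate but impossible, and the theorem (as stated for all $m\neq n$) holds only in the range $m>n$.
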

\begin{proof}
Note that, for any $i=1,2,\ldots,6$, by Theorems 3.3, $f_i(G;x)$ satisfies the differential equation Eq. (1.7). If $m\neq n$, then $f_i(G;0)=\frac{1}{m-n}\sum\limits_{e\in E(G)}f_i(G-e;0)$. Hence this differential equation has a unique solution, and the theorem holds.
\end{proof}

If $m=n$, we consider two digraphs $G_1$ and $G_2$ with vertex sets $V(G_1)=\{u_1,u_2,\ldots,u_n\},\\ V(G_2)=\{v_1,v_2,\ldots,v_n\}$ and arc sets $E(G_1)=\{(u_1,u_2), (u_2,u_3),\ldots,(u_{n-1},u_n),(u_n,u_1)\},\\ E(G_2)=\{(v_1,v_2), (v_2,v_3),\ldots,(v_{n-1},v_n),(v_1,v_n)\}$. Obviously,
$$f_1(G_1;x)=x^n-1,\ f_1(G_2;x)=x^{n},$$
$$f_4(G_1;x)=x^n+(-1)^n,\ f_4(G_2;x)=x^n,$$
and
$$f_1(G_1-a;x)=x^n,\ f_1(G_2-b;x)=x^n,$$
$$f_4(G_1-a;x)=x^n,\ f_4(G_2-b;x)=x^n$$
for any $a\in E(G_1), b\in E(G_2)$, and hence $$f_1(G_1;x)\neq f_1(G_2;x),\ f_4(G_1;x)\neq f_4(G_2;x).$$
But $$\{f_i(G_1-e;x)|e\in E(G_1)\}=\{f_i(G_2-e;x)|e\in E(G_2)\}=\left\{\overbrace{x^n,x^n,\ldots,x^n}^n\right\}$$ for $i=1,4$.

Hence, we have the following result.
\begin{theorem}
If $m=n$, the characteristic polynomial $f_1(G;x)$ and the permanental polynomial $f_4(G;x)$ of a digraph $G$ with $n$ vertices and $n$ arcs can not be determined uniquely by $\{f_1(G-e;x)|e\in E(G)\}$ and $\{f_4(G-e;x)|e\in E(G)\}$, respectively.
\end{theorem}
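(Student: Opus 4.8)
The final statement to prove is Theorem 4.2: for $m=n$, the polynomials $f_1$ and $f_4$ are not edge-reconstructible. The plan is essentially to exhibit an explicit counterexample pair of digraphs and compute the relevant polynomials directly.

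First I would choose two digraphs on the same vertex count $n$, each with exactly $n$ arcs, whose edge decks coincide but whose own polynomials differ. The natural candidates are the two orientations of an $n$-cycle: $G_1$ the ``consistently oriented'' directed cycle $u_1\to u_2\to\cdots\to u_n\to u_1$, and $G_2$ the same underlying cycle but with one arc reversed, say $v_1\to v_2\to\cdots\to v_n$ together with $v_1\to v_n$ (so $v_1$ has out-degree $2$ and $v_n$ has in-degree $2$ from two sources). Both have $n$ vertices and $n$ arcs, so $m=n$ and Theorem 4.1 does not apply.

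Next I would compute the four ``own'' polynomials. For $G_1$, the adjacency matrix is a single full cyclic permutation matrix, so $\det(xI-A)=x^n-1$ and, since the permanent ignores signs, ${\rm per}(xI-A)=x^n-(-1)^n\cdot$ correction; more carefully, $xI - A$ has exactly two nonzero generalized diagonals (the identity permutation contributing $x^n$ and the $n$-cycle contributing $(-1)^n\cdot$ sign for the determinant, $+1$ for the permanent), giving $f_1(G_1;x)=x^n-1$ and $f_4(G_1;x)=x^n+(-1)^n$. For $G_2$, the digraph is not strongly connected along a single Hamiltonian directed cycle — the arc set $\{(v_i,v_{i+1})\}\cup\{(v_1,v_n)\}$ contains no spanning cycle cover other than the identity — so the only nonzero generalized diagonal of $xI-A$ is the main diagonal, yielding $f_1(G_2;x)=f_4(G_2;x)=x^n$. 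Hence $f_1(G_1;x)\ne f_1(G_2;x)$ and $f_4(G_1;x)\ne f_4(G_2;x)$ as claimed in the excerpt.

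Then I would compute the edge decks. Deleting any arc from $G_1$ leaves a digraph with $n$ vertices, $n-1$ arcs, no directed cycle, hence $A$ strictly nilpotent after a suitable relabeling (a directed path), so $f_i(G_1-a;x)=x^n$ for $i=1,4$ and every arc $a$. Deleting any arc from $G_2$ likewise destroys the only structure that could produce a nontrivial diagonal, giving $f_i(G_2-b;x)=x^n$ for $i=1,4$ and every $b$. Therefore both edge decks equal the multiset $\{x^n,\dots,x^n\}$ of size $n$, so they are equal to each other, while the original polynomials differ; this is exactly the non-reconstructibility assertion. The only mild subtlety — the ``main obstacle,'' such as it is — is justifying that $G_2$ has no nontrivial cycle cover and that arc-deleted subdigraphs of both are acyclic; this is a short structural observation about the union of a directed path with one extra chord, and I would phrase it via the fact that in each such digraph every vertex except one has in-degree (or out-degree) at most one along the relevant part, forcing any permutation appearing in the $\det/{\rm per}$ expansion to be the identity. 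No further machinery is needed; Theorems 2.2, 2.3, 3.1, 3.3 play no role here since this is the complementary negative result.
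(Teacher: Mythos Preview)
Your proposal is correct and follows exactly the same approach as the paper: the same counterexample pair $G_1$ (the consistently oriented $n$-cycle) and $G_2$ (the $n$-cycle with one arc reversed), with the same computations showing $f_1,f_4$ differ on the originals but all arc-deleted polynomials equal $x^n$. The paper presents these calculations just as you describe, without additional machinery.
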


Note that, $f_2(G;x)=\det(xI-D+A)$. Hence $f_2(G;0)=\det(A-D)=0$. If $m=n$, then by Theorem 3.3,
$$xf_2'(G;x)=\sum_{e\in E(G)}f_2(G-e;x).$$
Given the initial condition $f_2(G;0)=0$, the differential equation above has a unique solution. So, combining with Theorem 4.1, the following theorem holds.

\begin{theorem}
The Laplacian  characteristic polynomial $f_2(G;x)=\det(xI-D+A)$ of a digraph $G$ with arc set $E(G)$ can be reconstructed from $\{f_2(G-e;x)|e\in E(G)\}$.
\end{theorem}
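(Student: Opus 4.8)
The plan is to reduce everything to Theorem 3.3 and to split into the two cases $m\neq n$ and $m=n$. When $m\neq n$ there is nothing left to prove: Theorem 4.1 already asserts that $f_2(G;x)$ can be reconstructed from its edge deck $\{f_2(G-e;x)\mid e\in E(G)\}$. So all the content lies in the degenerate case $m=n$, where the factor $m-n$ in front of $f_2(G;x)$ in Eq. (1.7) vanishes and Theorem 3.3 collapses to the first-order identity
\[
x\,f_2'(G;x)=\sum_{e\in E(G)}f_2(G-e;x),
\]
whose right-hand side is a polynomial computable from the edge deck. The strategy is then to extract $f_2(G;x)$ from this identity together with one extra piece of information about its constant term.

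For the extraction step I would compare coefficients. Write $f_2(G;x)=x^{n}+c_{n-1}x^{n-1}+\cdots+c_1x+c_0$, which is monic of degree $n$ since it is a characteristic polynomial. Then $x\,f_2'(G;x)=\sum_{k=1}^{n}k\,c_k\,x^{k}$ (with $c_n=1$), and matching this against the known polynomial $\sum_{e\in E(G)}f_2(G-e;x)$ recovers each $c_k$ for $1\le k\le n-1$ (the coefficient of $x^{n}$ just reproduces the consistency check $n\cdot 1=n$). Equivalently, the operator $p\mapsto x\,p'$ on polynomials has kernel exactly the constants, so the edge deck determines $f_2(G;x)$ up to, and only up to, its constant term $c_0=f_2(G;0)$.

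It remains to pin down $f_2(G;0)$, and this is the one place where one uses the structure of $G$ rather than the abstract identity. Here $f_2(G;0)=\det(0\cdot I-D+A)=\det(A-D)$, and for each $j$ the $j$-th column sum of $A$ is $\sum_i a_{ij}$, which by definition is the number of arcs with head $v_j$, i.e. $d^+(v_j)$, the $j$-th diagonal entry of $D$; hence the all-ones row vector $\mathbf 1^{T}$ satisfies $\mathbf 1^{T}(A-D)=\mathbf 0^{T}$, so $A-D$ is singular and $\det(A-D)=0$. Thus $c_0=0$ regardless of the value of $m$, and in the case $m=n$ this supplies exactly the missing coefficient, completing the reconstruction; together with Theorem 4.1 for $m\neq n$ the theorem follows. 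I do not anticipate a genuine obstacle — the only points that need to be stated carefully are why $x\,p'$ loses precisely the constant term of $p$ and why that lost term is forced to vanish for the (in-degree) Laplacian matrix.
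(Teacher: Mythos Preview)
Your proposal is correct and follows essentially the same route as the paper: split into $m\neq n$ (handled by Theorem~4.1) and $m=n$, where Theorem~3.3 reduces to $x\,f_2'(G;x)=\sum_{e\in E(G)}f_2(G-e;x)$ and the missing constant term is supplied by $f_2(G;0)=\det(A-D)=0$. You actually supply more detail than the paper, which simply asserts $\det(A-D)=0$ and that the differential equation then has a unique solution; your column-sum argument and coefficient-matching make these points explicit.
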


\section{Discussions}
In this paper, we consider mainly the problem about reconstructing characteristic polynomial $f_1(G;x)=\det(xI-A)$, Laplacian characteristic polynomial $f_2(G;x)=\det(xI-D+A)$, signless Laplacian characteristic polynomial $f_3(G;x)=\det(xI-D-A)$, permanental polynomial $f_4(G;x)={\rm per}(xI-A)$, Laplacian permanental polynomial $f_5(G;x)={\rm per}(xI-D+A)$, and signless Laplacian permanental polynomial $f_6(G;x)={\rm per}(xI-D-A)$ of a digraph $G$ from the collection of characteristic polynomials, Laplacian characteristic polynomials, signless Laplacian characteristic polynomials, permanental polynomials, Laplacian permanental polynomials, and signless Laplacian permanental polynomials of the edge deck of $G$, respectively.

We find that it is very different from the reconstruction conjecture, the edge reconstruction conjecture and the reconstruction conjecture on the characteristic polynomial of a graph, which are still open, if $m\neq n$, then six digraph polynomials $f_1(G;x), f_2(G;x),\\f_3(G;x), f_4(G;x),f_5(G;x), f_6(G;x)$ of a digraph $G=(V(G),E(G))$ can be reconstructed from $\{f_1(G-e;x)|e\in E(G)\}, \{f_2(G-e;x)|e\in E(G)\}, \{f_3(G-e;x)|e\in E(G)\}, \{f_4(G-e;x)|e\in E(G)\}, \{f_5(G-e;x)|e\in E(G)\}, \{f_6(G-e;x)|e\in E(G)\}$, respectively. We also show that if $m=n$, then $f_2(G;x)$ can be reconstructed from $\{f_2(G-e;x)|e\in E(G)\}$, and $f_1(G;x)$ and $f_4(G;x)$ can not be reconstructed from $\{f_2(G-e;x)|e\in E(G)\}$ and $\{f_4(G-e;x)|e\in E(G)\}$. A natural question is: Characterize the digraphs $G$ such that $f_3(G;x), f_5(G;x)$ and $f_6(G;x)$ can reconstructed from $\{f_3(G-e;x)|e\in E(G)\}, \{f_5(G-e;x)|e\in E(G)\}$ and $\{f_6(G-e;x)|e\in E(G)\}$ if $m=n$, respectively.

\end{document}